\documentclass[12pt]{article}
\RequirePackage{etex}

\usepackage[margin=1in]{geometry}
\usepackage[english]{babel}
\usepackage[utf8]{inputenc}
\usepackage{subcaption}
\usepackage{mathtools}
\usepackage{amssymb}
\usepackage{amsfonts}
\usepackage{amsthm}
\usepackage{mathrsfs}
\usepackage{epstopdf}
\usepackage{inputenc}
\usepackage[all]{xy}
\usepackage[pdftex]{graphicx}
\usepackage{color}
\usepackage{cite}
\usepackage{url}
\usepackage{indent first}
\usepackage[labelfont=bf,labelsep=period,justification=raggedright]{caption}
\usepackage[english]{babel}
\usepackage[utf8]{inputenc}
\usepackage{hyperref}
\usepackage[colorinlistoftodos]{todonotes}
\usepackage{tkz-fct}
\usepackage{tikz}
\usetikzlibrary{calc}
\usepackage{multicol}
\PassOptionsToPackage{dvipsnames,svgnames}{xcolor}
\usepackage{textcomp}

\def\multiset#1#2{\ensuremath{\left(\kern-.3em\left(\genfrac{}{}{0pt}{}{#1}{#2}\right)\kern-.3em\right)}}

\theoremstyle{plain}
\newtheorem{theorem}{Theorem}

\newtheorem{proposition}[theorem]{Proposition}

\newtheorem{remark}[theorem]{Remark}

\numberwithin{equation}{section}
\numberwithin{theorem}{section}

\usepackage{arxiv}

\usepackage[utf8]{inputenc} 
\usepackage[T1]{fontenc}    
\usepackage{hyperref}       
\usepackage{url}            
\usepackage{booktabs}       
\usepackage{amsfonts}       
\usepackage{nicefrac}       
\usepackage{microtype}      
\usepackage{lipsum}

\title{Moments approach for the elephant random walk}

\author{{\Large Pradeep Vishwakarma}\\\\
  Statistics and Mathematics Unit\\
  Indian Statistical Institute, Kolkata, 700108, West Bengal, India \\
  \texttt{vishwakarmapr.rs@gmail.com\thanks{The author acknowledge the support of National Post Doctoral Fellowship, PDF/2025/000076, from Anusandhan National Research Foundation (ANRF), Govt. of India}} \\
}

\begin{document}

\maketitle

\begin{abstract}

We discuss the method of moments for the one-dimensional elephant random walk (ERW). We first derive a differential recurrence relation for the characteristic function of the ERW, which yields a corresponding system of recurrence relations for its moments. We then obtain asymptotic approximations for the moments in each of the three parameter regimes of the ERW. Finally, by establishing the convergence of the moments and verifying the corresponding moment-determinacy conditions, we identify the limiting distributions of the ERW in each regime.
\end{abstract}

\keywords{elephant random walk \and method of moment\and super-diffusive elephant random walk\and central limit theorem}
\vskip3pt
\textbf{AMS Subject Classification [2020]:} 60G50, 60E10, 60E05

\section{Introduction} The ERW is a stochastic model in which the transition mechanism depends on the entire past trajectory of the walker, thereby incorporating complete memory of its history. The one-dimensional ERW was introduced by Sch{\"u}tz and Trimper \cite{Schutz2004} to investigate how long-term memory effects in a random walk can lead to the anomalous diffusive behavior. Let $X_1$ be a random variable such that $\mathbb{P}\{X_1=1\}=q$ and $\mathbb{P}\{X_1=-1\}=1-q$ for $q\in(0,1]$. For $n\ge1$, at time $n+1$, the step variable $X_{n+1}$ is given as 
\begin{equation*}
	X_{n+1}\coloneqq \begin{cases}
		+X_{U_n}\ \text{with probability $p$},\\
		-X_{U_n}\ \text{with probability $1-p$},
	\end{cases}
\end{equation*}
where $p\in(0,1)$ is called the memory parameter of the ERW, and $U_n$ is uniform random variable over $\{1,\dots,n\}$. The ERW, denoted by $\{S_n\}_{n\ge0}$ is defined as
\begin{equation}\label{Erwdef}
	S_n\coloneqq X_1+\dots+X_n,\ \ \text{with $S_0=0$}.
\end{equation}
In the past few years, the one-dimensional ERW has been studied extensively; see, for example, \cite{Baur2016, Bercu2018, Coletti2017a, Coletti2017b, Da Silva2013, Schutz2004, Kursten2016}. Its multidimensional extension has also been investigated (see \cite{Bercu2019, Bertenghi2022, Qin2025}). The asymptotic behavior of the one-dimensional ERW is determined by the memory parameter $p$ relative to the critical value $p=3/4$. In the diffusive regime, $p<3/4$, and at the critical value, $p=3/4$, both a strong law of large numbers and a central limit theorem (CLT) have been established (see \cite{Baur2016, Coletti2017a, Guevara2019}). In contrast, in the super-diffusive regime, $p>3/4$, the appropriately normalized position, $S_n/n^{2p-1}$, converges almost surely to a non-Gaussian limiting random variable (see \cite{Baur2016, Bercu2018, Coletti2017a, Guerin2025, Guerin2026}). For more recent developments concerning the ERW and its extensions, see \cite{Bertenghi2022, Coletti2021, Dhillon2025, Fan2021, Roy2024, Roy2025}, and the references therein.

The martingale approach is one of the most popular techniques used to investigate the limiting behavior of the ERWs. In this method, an appropriate martingale is constructed from the ERW, and martingale limit theorems are then employed to derive its asymptotic behavior; see, for example, \cite{Bercu2018, Bercu2019, Coletti2017a}. An alternative approach, based on the connection between the ERW and P\'olya urn models, was developed in \cite{Baur2016} and has also been used to study the limiting behavior of the walk.

Although these approaches provide detailed information about the asymptotic behavior of the ERW, they give only limited information about the limiting distribution in the super-diffusive regime. In particular, they yield the first four moments of the corresponding limiting random variable but do not provide a characterization of the full limiting distribution. More recently, Gu\'erin et al. \cite{Guerin2026} developed a fixed-point approach to the super-diffusive regime. They derived a distributional fixed-point equation for the limiting random variable and used it to establish the existence of its density. Furthermore, they obtained a nonlinear recurrence relation for the moments of the limiting distribution and proved that the distribution is uniquely determined by these moments. Subsequently, in \cite{Guerin2025}, the authors exploited the characterization obtained in \cite{Guerin2026} to investigate the asymptotic behavior of the density of the super-diffusive limit at infinity.

\vskip5pt

In this article, we present a moment approach for the one-dimensional ERW. In particular, we establish the CLT by proving the convergence of the moments of the appropriately normalized ERW, and subsequently identifying the limiting distribution. This approach provides a unified treatment of the diffusive and critical regimes and, importantly, establishes the characterization of the weak limit in the super-diffusive regime via its moments. Such a characterization was not obtained through the earlier martingale or P\'olya urn approaches. The super-diffusive limiting distribution was instead recently identified via a fixed-point equation for the limiting random variable (see \cite{Guerin2026}). The main contribution of the paper is summarized as follows:

In Section \ref{sec2}, we first derive a new system of differential equations governing the characteristic functions of the sequence $\{S_n\}_{n\geq1}$ (see Proposition \ref{prop1}). This system is then used to establish a recurrence relation for the moments of the ERW. In Theorem \ref{prop2}, we use this recurrence to obtain precise asymptotic approximations for the moments of the ERW in all three regimes: diffusive, critical, and super-diffusive.

In Theorem \ref{difflim}, we establish the central limit theorem in the diffusive and critical regimes by combining the convergence of the normalized moments with the fact that the Gaussian distribution is uniquely determined by its moments. In Theorem \ref{thm:uniquness}, we address the super-diffusive regime. We show that the limiting moments in this regime are non-Gaussian, and provide an alternate proof that the limiting moments uniquely identify the corresponding limiting probability distribution.

In Section \ref{proofs}, we provide the proofs of the results established in Section \ref{sec2}. Finally, in Section \ref{sec:conclusion}, we provide some concluding remarks.

\section{Moments of the ERW}\label{sec2}
 Here, we derive an approximation of the moments of the ERW using its characteristic function. Let $\{S_n\}_{n\ge0}$ be the ERW with first step $X_1$, whose characteristic function is given by
\begin{align}\label{cfs1}
	\mathbb{E}[e^{iuX_1}]&=qe^{iu}+(1-q)e^{-iu}=\cos(u)+i(2q-1)\sin(u),\ u\in\mathbb{R}.
\end{align}

Next, we derive a system of differential equations governing the characteristic functions of $\{S_n\}_{n\ge1}$.
\begin{proposition}\label{prop1}
	Let $\{S_n\}_{n\ge1}$ be the ERW as defined in (\ref{Erwdef}).\vskip5pt
	\noindent (i) For $n\ge1$, let $\phi_n(u)\coloneqq\mathbb{E}[e^{iuS_n}]$, $u\in\mathbb{R}$ be the characteristic function of $S_n$. It solves the following system of differential equations:
	\begin{equation}\label{re11}
		\phi_{n+1}(u)=\cos(u)\phi_n(u)+\frac{\alpha\sin(u)}{n}\frac{\mathrm{d}}{\mathrm{d}u}\phi_n(u),
	\end{equation}
	with $\phi_n(0)=1$ for each $n\ge1$, and where $\alpha\coloneqq2p-1\in[-1,1]$. \vskip5pt
	\noindent (ii) Let $\mu_{n}^{(r)}\coloneqq\mathbb{E}[S_n^r]$, $r\ge1$ be the $r$th moment of $S_n$. Then, 
	\begin{equation}\label{momentrel}
		\mu_{n+1}^{(r)}=\mathbb{I}\{r=\text{even}\}+\Big(1+\frac{\alpha}{n}r\Big)\mu_n^{(r)}+\sum_{\substack{1\leq l\leq r-1,\\ \text{$l$ is even}}}\bigg[\binom{r}{l}+\frac{\alpha}{n}\binom{r}{l+1}\bigg]\mu_n^{(r-l)},\ r\ge1,\ n\ge1,
	\end{equation}
	where $\mathbb{I}$ denotes the identity. Also, the empty sum is taken to be $0$.
\end{proposition}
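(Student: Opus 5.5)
Conditioning on the natural filtration $\mathcal{F}_n=\sigma(X_1,\dots,X_n)$ reduces everything to a one-step conditional law. Since $U_n$ is uniform on $\{1,\dots,n\}$ and independent of the past, we have $\mathbb{P}\{X_{U_n}=1\mid\mathcal{F}_n\}=\frac{1}{2}+\frac{S_n}{2n}$. Combining this with the sign flip of probability $1-p$, we get
\begin{equation*}
\mathbb{P}\{X_{n+1}=\pm1\mid\mathcal{F}_n\}=\frac{1}{2}\Big(1\pm\frac{\alpha S_n}{n}\Big),\qquad \alpha=2p-1.
\end{equation*}
Indeed, the probability of $+1$ is $p\frac{n+S_n}{2n}+(1-p)\frac{n-S_n}{2n}=\frac12+\frac{(2p-1)S_n}{2n}$. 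Thus $\mathbb{E}[X_{n+1}\mid\mathcal{F}_n]=\alpha S_n/n$ and $X_{n+1}^2=1$. These two facts drive both parts.

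For part (i), write $\phi_{n+1}(u)=\mathbb{E}\big[e^{iuS_n}\mathbb{E}[e^{iuX_{n+1}}\mid\mathcal{F}_n]\big]$. The inner conditional expectation equals
\begin{equation*}
\mathbb{E}[e^{iuX_{n+1}}\mid\mathcal{F}_n]=\cos(u)+i\sin(u)\frac{\alpha S_n}{n}.
\end{equation*}
Hence $\phi_{n+1}(u)=\cos(u)\phi_n(u)+\frac{\alpha\sin(u)}{n}\mathbb{E}[iS_ne^{iuS_n}]$. Since $S_n$ is bounded by $n$, differentiation under the expectation is justified, so $\mathbb{E}[iS_ne^{iuS_n}]=\phi_n'(u)$, which gives (\ref{re11}). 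The normalization $\phi_n(0)=1$ is immediate. The initial condition $\phi_1$ is given by (\ref{cfs1}).

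For part (ii), I would work directly with moments rather than differentiating (\ref{re11}) $r$ times, although the two routes are equivalent. Expand $\mathbb{E}[(S_n+X_{n+1})^r\mid\mathcal{F}_n]=\sum_{k=0}^r\binom{r}{k}S_n^{r-k}\mathbb{E}[X_{n+1}^k\mid\mathcal{F}_n]$. Here $X_{n+1}^k=1$ for even $k$ and $X_{n+1}^k=X_{n+1}$ for odd $k$, whose conditional mean is $\alpha S_n/n$. So the even-$k$ terms contribute $\binom{r}{k}S_n^{r-k}$. The odd-$k$ terms, with $k=l+1$ for even $l$, contribute $\frac{\alpha}{n}\binom{r}{l+1}S_n^{r-l}$.

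Taking expectations and grouping by the power $r-l$ with $l$ even, $0\le l\le r$, yields the recurrence. The term $l=0$ gives $(1+\frac{\alpha r}{n})\mu_n^{(r)}$. The term $l=r$ occurs only for even $r$ and gives $\binom{r}{r}\mu_n^{(0)}=1$, which is the indicator term; the accompanying $\binom{r}{r+1}=0$ vanishes. The remaining even $l$ with $1\le l\le r-1$ give the sum, including the odd-$k$ contribution $\frac{\alpha}{n}\binom{r}{l+1}$, which is present since $l+1\le r$.

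The main obstacle is only bookkeeping at the boundary indices $l=0$ and $l=r$, together with the parity of $r$. For example, when $r$ is odd the top index $l=r-1$ is even and appears in the sum with $\binom{r}{r}=1$ in the $\frac{\alpha}{n}$ coefficient, and this must be checked against (\ref{momentrel}). One could alternatively derive (ii) from (i) via Leibniz, using $\mu_n^{(r)}=i^{-r}\phi_n^{(r)}(0)$ and the Taylor coefficients of $\cos$ and $\sin$. The conditional-expectation route avoids the factors of $i$, so I would use it.
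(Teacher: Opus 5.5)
Your proof is correct. For part (i) it is essentially the paper's argument. Both condition on $\mathcal{F}_n$ and obtain $\mathbb{E}[e^{iuX_{n+1}}\mid\mathcal{F}_n]=\cos(u)+\frac{i\alpha\sin(u)}{n}S_n$. You reach this from the conditional law $\mathbb{P}\{X_{n+1}=\pm1\mid\mathcal{F}_n\}=\frac12(1\pm\alpha S_n/n)$. The paper instead averages $pe^{iuX_k}+(1-p)e^{-iuX_k}$ over $k$ using $\cos(uX_k)=\cos(u)$ and $\sin(uX_k)=X_k\sin(u)$; this is the same computation written differently.

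Part (ii) is where you genuinely differ. The paper obtains (\ref{momentrel}) as a corollary of (\ref{re11}). It differentiates $r$ times by Leibniz's rule, sets $u=0$, and converts via $\phi_n^{(j)}(0)=i^j\mu_n^{(j)}$ together with the values of $\cos^{(l)}(0)$ and $\sin^{(l)}(0)$. Its shorthand $\cos^{(l)}(0)=i^l$ and $\sin^{(l)}(0)=i^{l-1}$ is valid only for even, respectively odd, $l$, which are the only terms that survive. You instead expand $(S_n+X_{n+1})^r$ binomially and use only $X_{n+1}^2=1$ and $\mathbb{E}[X_{n+1}\mid\mathcal{F}_n]=\alpha S_n/n$.

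Your route is independent of (i) and avoids the powers of $i$. It also makes the boundary terms transparent: $l=0$ gives $(1+\alpha r/n)\mu_n^{(r)}$ and $l=r$ gives the indicator. Your index check is right, including the $\frac{\alpha}{n}\binom{r}{r}$ coefficient at $l=r-1$ for odd $r$. The paper's route instead shows that the whole moment system is encoded in the single functional equation (\ref{re11}). That is the structural point it emphasizes, and it is the template it reuses for the joint characteristic function $\Psi_n$ of the center of mass.
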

\begin{remark}
From (\ref{momentrel}), for $n\ge1$, we get
\begin{align*}
	&\mathbb{E}[S_{n+1}]=\Big(1+\frac{\alpha}{n}\Big)\mathbb{E}[S_n],\\   &\mathbb{E}[S_{n+1}^2]=1+\Big(1+\frac{2\alpha}{n}\Big)\mathbb{E}[S_n^2],\\
    &\mathbb{E}[S_{n+1}^3]=\Big(1+\frac{3\alpha}{n}\Big)\mathbb{E}[S_n^3]+\Big(3+\frac{\alpha}{n}\Big)\mathbb{E}[S_n],\\
    &\mathbb{E}[S_{n+1}^4]=1+\Big(1+\frac{4\alpha}{n}\Big)\mathbb{E}[S_n^4]+\Big(6+\frac{4\alpha}{n}\Big)\mathbb{E}[S_n^2],
\end{align*}	
where the first two equations agree with those obtained in \cite{Schutz2004}.
\end{remark}

We now derive an approximation of the moments of ERW. 
\begin{theorem}\label{prop2} (i)
	Suppose $\alpha<\frac{1}{2}$. As $n\rightarrow\infty$, for $m\ge1$, we have
	\begin{equation}\label{momap1}
		\mu_n^{(r)}\sim\begin{cases}
			C_{2m}(\alpha)n^m,\ r=2m,\\
			O(n^{m-1+\alpha}),\ r=2m-1,
		\end{cases}
	\end{equation}
	where $C_0(\alpha)=1$ and
	\begin{equation}\label{coeffs}
		C_{2m}(\alpha)=\frac{\binom{2m}{2}}{m(1-2\alpha)}C_{2m-2}(\alpha), 
	\end{equation}
	\noindent (ii) If $\alpha=\frac{1}{2}$, then
	\begin{equation}
		\mu_n^{(r)}\sim\begin{cases}
			C_{2m}(n\log(n))^m,\ r=2m,\\
			O(n^{m-\frac{1}{2}}(\log(n))^{m-1}),\ r=2m-1,
		\end{cases}
	\end{equation}
	where $C_0=1$ and
	\begin{equation}\label{coeffs1}
	C_{2m}=\frac{\binom{2m}{2}}{m}C_{2m-2}, m\ge1.
	\end{equation}
    \noindent (iii) If $\alpha>\frac{1}{2}$, then
	\begin{equation}\label{moment3:appr}
		\mu_{n}^{(r)}\sim C_{n,r}(\alpha)n^{r\alpha},\ r\ge1,
	\end{equation}
	where $\mu_1^{(r)}=\mathbb{E}[X_1^r]$, $r\ge1$, and $\{C_{n,r}\}_{n\ge1}$  is a convergent sequence for each $r\ge1$, given as follows:
	\begin{align}
		C_{n,r}(\alpha)&=\frac{\mu_{1}^{(r)}}{\Gamma(1+r\alpha)}+\sum_{k=1}^{n-1}\frac{\Gamma(k+1)}{\Gamma(k+1+r\alpha)}\nonumber\\
        &\hspace{3cm} \cdot\Big[\mathbb{I}\{r=\text{even}\}+\sum_{\substack{1\leq l\leq r-1,\\ \text{$l$ is even}}}\bigg[\binom{r}{l}+\frac{\alpha}{k}\binom{r}{l+1}\bigg]C_{k,r-l}k^{(r-l)\alpha}\Big].\label{supdiff:mom}
	\end{align}
    \end{theorem}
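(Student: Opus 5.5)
The plan is to solve the linear recurrence (\ref{momentrel}) of Proposition \ref{prop1} explicitly in $n$, for each fixed $r$, and then argue by strong induction on $r$. For fixed $r$ write (\ref{momentrel}) as $\mu_{n+1}^{(r)}=(1+\frac{r\alpha}{n})\mu_n^{(r)}+f_n^{(r)}$, where $f_n^{(r)}$ collects the indicator and the lower moments $\mu_n^{(r-l)}$ with $l\ge 2$ even. Put $a_n^{(r)}:=\prod_{k=1}^{n-1}(1+\frac{r\alpha}{k})=\frac{\Gamma(n+r\alpha)}{\Gamma(1+r\alpha)\Gamma(n)}$. Iterating gives the exact formula
\begin{equation*}
\mu_n^{(r)}=a_n^{(r)}\Big[\mu_1^{(r)}+\sum_{k=1}^{n-1}\frac{f_k^{(r)}}{a_{k+1}^{(r)}}\Big],\qquad \frac{1}{a_{k+1}^{(r)}}=\frac{\Gamma(1+r\alpha)\Gamma(k+1)}{\Gamma(k+1+r\alpha)}.
\end{equation*}
This is valid whenever $1+r\alpha/k\neq0$, and the exceptional case $\alpha=-1$, $k=r$ can be handled by starting the iteration later. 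Since $\Gamma(n+r\alpha)/\Gamma(n)\sim n^{r\alpha}$, everything reduces to the asymptotics of the weighted sum $\sum_{k<n}k^{-r\alpha}f_k^{(r)}$.

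\emph{Case (iii), $\alpha>1/2$.} Here the induction hypothesis is $\mu_k^{(s)}=C_{k,s}k^{s\alpha}$ for $s<r$ with $C_{k,s}$ bounded. Substituting this into the exact formula reproduces (\ref{supdiff:mom}) verbatim, after absorbing $\Gamma(n+r\alpha)/(\Gamma(n)n^{r\alpha})\to1$ into the symbol $\sim$. Convergence of $C_{n,r}$ then amounts to summability of the terms. The summand is $O\big(k^{-r\alpha}(1+k^{(r-2)\alpha})\big)=O\big(k^{-2\alpha}+k^{-r\alpha}\big)$. This is summable because $2\alpha>1$, and $r\alpha>1$ for $r\ge2$. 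For $r=1$ there is no forcing term at all, so $C_{n,1}$ is constant.

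\emph{Cases (i) and (ii), $\alpha\le1/2$.} Again use induction on $r$, treating even and odd $r$ separately. For $r=2m$ the dominant forcing term is $l=2$, namely $\binom{2m}{2}\mu_k^{(2m-2)}\sim\binom{2m}{2}C_{2m-2}k^{m-1}$ (times $(\log k)^{m-1}$ in the critical case). The terms with $l\ge4$, the $\alpha/k$ corrections, and the odd lower moments are of strictly smaller order by the hypothesis, using $\alpha<1/2$ so that $m-2+\alpha<m-1$. The sum then becomes $\sum_{k<n}k^{m-1-2m\alpha}$. When $\alpha<1/2$ the exponent exceeds $-1$, so the sum is $\sim n^{m-2m\alpha}/(m(1-2\alpha))$. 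Multiplying by $a_n\sim n^{2m\alpha}$ gives $C_{2m}n^m$ with exactly the ratio (\ref{coeffs}), and the homogeneous term $\mu_1^{(2m)}n^{2m\alpha}=o(n^m)$ is negligible. When $\alpha=1/2$ the sum is $\sum k^{-1}(\log k)^{m-1}\sim(\log n)^m/m$, which gives (\ref{coeffs1}). For odd $r=2m-1$ the forcing is dominated by $\binom{2m-1}{2}\mu_k^{(2m-3)}=O(k^{m-2+\alpha})$ (with the corresponding logarithms). Summing against $k^{-(2m-1)\alpha}$ and multiplying back gives $O(n^{m-1+\alpha})$, while the homogeneous part $O(n^{(2m-1)\alpha})$ is also within that bound since $(2m-2)\alpha\le m-1$. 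The base case $r=1$ is $\mu_n^{(1)}=(2q-1)a_n^{(1)}=O(n^\alpha)$.

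The main obstacle is the bookkeeping of error terms in the regime $\alpha\le 1/2$. One must check that every subdominant contribution is genuinely $o$ of the main term. This includes negative $\alpha$, where $r\alpha$ may be very negative and some $a_k$ factors are small or vanish, and the critical case, where the logarithms must line up so that the odd moments stay below the even ones. To keep this under control I would first prove a lemma: if $g_k\sim c\,k^{\beta}(\log k)^j$ with $\beta>-1$, then $\sum_{k<n}g_k\sim c\,n^{\beta+1}(\log n)^j/(\beta+1)$. I would state the analogous fact for $\beta=-1$ and the $O$-versions as well, and then apply the lemma uniformly at each induction step.
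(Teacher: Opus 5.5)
Your treatment of (i) and (ii) follows the paper's route: explicit solution of the first-order recurrence via $\Gamma(n+r\alpha)/\Gamma(n)$, then strong induction on $r$ split by parity. It is sound there because the weighted sums diverge in those regimes, so replacing the forcing term by an asymptotically equivalent sequence is legitimate. One small correction: $1+r\alpha/k=0$ occurs whenever $-r\alpha$ is a positive integer (e.g.\ $\alpha=-\frac{1}{2}$, $r=2$, $k=1$), not only for $\alpha=-1$, which $p\in(0,1)$ excludes anyway. Starting the iteration at some $k_0>r|\alpha|$ still handles it.

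The genuine gap is in (iii). Your induction hypothesis is the \emph{exact} identity $\mu_k^{(s)}=C_{k,s}k^{s\alpha}$. Your induction step, however, only delivers $\mu_n^{(r)}=\frac{\Gamma(n+r\alpha)}{\Gamma(n)}C_{n,r}$; exactness is lost when you absorb $\Gamma(n+r\alpha)/(\Gamma(n)n^{r\alpha})\to1$ into $\sim$. Already $\mu_k^{(1)}=(2q-1)\frac{\Gamma(k+\alpha)}{\Gamma(k)\Gamma(1+\alpha)}\neq C_{k,1}k^{\alpha}$ for $q\neq\frac{1}{2}$.

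Weakening the hypothesis to $\mu_k^{(s)}\sim C_{k,s}k^{s\alpha}$ does not rescue the argument. The weights $w_k=\Gamma(k+1)/\Gamma(k+1+r\alpha)$ make the series convergent, so the substitution error $\sum_k w_k\big[\binom{r}{l}+\frac{\alpha}{k}\binom{r}{l+1}\big](\mu_k^{(r-l)}-C_{k,r-l}k^{(r-l)\alpha})$ converges to a constant with no reason to vanish. The principle (\ref{eql:appr}) needs divergent sums, which is why it worked in (i) and (ii).

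In fact, with $C_{n,r}$ defined recursively by (\ref{supdiff:mom}), part (iii) fails as soon as lower moments enter the forcing, for every $\alpha\in(\frac{1}{2},1)$. From the exact solution,
\begin{equation*}
\lim_{n\to\infty}\frac{\mu_n^{(3)}}{n^{3\alpha}}-\lim_{n\to\infty}C_{n,3}=\frac{2q-1}{\Gamma(1+\alpha)}\sum_{k\ge1}\frac{\Gamma(k+1)}{\Gamma(k+1+3\alpha)}\Big(3+\frac{\alpha}{k}\Big)\Big(\frac{\Gamma(k+\alpha)}{\Gamma(k)}-k^{\alpha}\Big).
\end{equation*}
Every summand is strictly negative because $\Gamma(k+\alpha)/\Gamma(k)<k^{\alpha}$ for $0<\alpha<1$. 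So for $q\neq\frac{1}{2}$ the ratio $\mu_n^{(3)}/(C_{n,3}n^{3\alpha})$ does not tend to $1$.

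Likewise, for every $q$,
\begin{equation*}
\lim_{n\to\infty}\frac{\mu_n^{(4)}}{n^{4\alpha}}-\lim_{n\to\infty}C_{n,4}=\sum_{k\ge1}\frac{\Gamma(k+1)}{\Gamma(k+1+4\alpha)}\Big(6+\frac{4\alpha}{k}\Big)C_{k,2}\Big(\frac{\Gamma(k+2\alpha)}{\Gamma(k)}-k^{2\alpha}\Big)>0,
\end{equation*}
since $\mu_k^{(2)}=\frac{\Gamma(k+2\alpha)}{\Gamma(k)}C_{k,2}$ exactly and $\Gamma(k+2\alpha)/\Gamma(k)>k^{2\alpha}$ when $1<2\alpha<2$.

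The paper's own proof makes the same slip: it replaces $g_{k,m}$ by $\sum(\cdots)C_{k,m-l}k^{(m-l)\alpha}$ inside a convergent sum. Following it will therefore not close the gap.

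The repair is to put $\Gamma(k+(r-l)\alpha)/\Gamma(k)$ in place of $k^{(r-l)\alpha}$ in (\ref{supdiff:mom}), or equivalently to keep the true moments $\mu_k^{(r-l)}$ there. Then $\mu_n^{(r)}=\frac{\Gamma(n+r\alpha)}{\Gamma(n)}C_{n,r}$ holds exactly for all $n$ and $r$. Your summability bound shows that $C_{n,r}$ converges, and $\mu_n^{(r)}/n^{r\alpha}\to\lim_n C_{n,r}$ follows. As written, your argument (like the paper's) establishes (iii) only for $r=1,2$, where no lower moments enter the forcing.
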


\begin{remark}\label{remmom}
From (\ref{coeffs}), we have
	\begin{equation*}
		C_{2m}(\alpha)=\prod_{j=1}^{m}\frac{\binom{2j}{2}}{j(1-2\alpha)}=\frac{1}{(1-2\alpha)^m}\prod_{j=1}^{m}(2j-1)=\frac{(2m-1)!!}{(1-2\alpha)^m},\ m\ge1,\ \alpha<\frac{1}{2},
	\end{equation*}
	where $(2m-1)!!=(2m-1).(2m-3)\dots3.1$. 	
    Thus, $C_{2m}(\alpha)$ coincides with the $(2m)$th moments of a Gaussian rv with mean zero and variance $\frac{1}{1-2\alpha}$,  $\alpha<\frac{1}{2}$. Similarly, $C_{2m}$ as defined in (\ref{coeffs1}), coincides with the $(2m)$th moment of a standard Gaussian rv.
\end{remark}

\noindent{\bf CLT for the ERW via moments.} We now prove the central limit theorem for the ERW using the convergence of its moments. Previously, these results were established using the martingale method (see \cite{Bercu2018, Coletti2017a}).

First, we consider the diffusive and critical regimes. It is well known that the Gaussian distribution is uniquely determined by its moments. Therefore, in view of Remark \ref{remmom}, and using Theorem \ref{prop2} (i) and (ii), we immediately get the following result:

\begin{theorem}\label{difflim}
    \noindent (i) (Diffusive regime) If $\alpha<\frac{1}{2}$, then
    \begin{equation*}
        \frac{S_n-\mathbb{E}[S_n]}{\sqrt{n}}\overset{d}{\longrightarrow}\mathcal{N}(0,\frac{1}{1-2\alpha}),
    \end{equation*}
    where $\overset{d}{\longrightarrow}$ denotes the convergence in distribution.\vskip5pt
    \noindent (ii) (Critical regime) For $\alpha=\frac{1}{2}$,
    \begin{equation*}
        \frac{S_n-\mathbb{E}[S_n]}{\sqrt{n\log(n)}}\overset{d}{\longrightarrow}\mathcal{N}(0,1).
    \end{equation*}
\end{theorem}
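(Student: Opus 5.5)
We argue by the method of moments, using Theorem \ref{prop2}(i),(ii), Remark \ref{remmom}, and the fact that the Gaussian law is determined by its moments. Set $a_n=\sqrt{n}$ in case (i) and $a_n=\sqrt{n\log n}$ in case (ii). Write $T_n=(S_n-\mathbb{E}[S_n])/a_n$. It suffices to show $\mathbb{E}[T_n^r]\to \mathbb{E}[Z^r]$ for every $r\ge1$, where $Z\sim\mathcal N(0,\sigma^2)$ with $\sigma^2=\frac{1}{1-2\alpha}$ or $1$. The Gaussian moment generating function is finite everywhere, so Carleman's condition holds. Convergence of all moments to those of a determinate law then gives $T_n\overset{d}{\to}Z$ (by tightness from the bounded second moments, plus uniform integrability along subsequences).

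First we control the centering. From the first line of the Remark after Proposition \ref{prop1}, $\mathbb{E}[S_n]=(2q-1)\prod_{k=1}^{n-1}(1+\alpha/k)$, which equals $(2q-1)\frac{\Gamma(n+\alpha)}{\Gamma(n)\Gamma(1+\alpha)}=O(n^{\alpha})$ (for $\alpha=-1$ it vanishes from $n=2$ on). In both regimes $\alpha\le 1/2$, so $\mathbb{E}[S_n]/a_n\to0$ if $\alpha<1/2$. At $\alpha=1/2$ we get $n^{1/2}/\sqrt{n\log n}\to0$ as well.

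Next we expand binomially:
\[
\mathbb{E}[T_n^r]=a_n^{-r}\sum_{j=0}^{r}\binom{r}{j}\mu_n^{(r-j)}\big(-\mathbb{E}[S_n]\big)^j .
\]
The $j=0$ term is $\mu_n^{(r)}/a_n^r$.
\begin{itemize}
\item For $r=2m$, Theorem \ref{prop2} gives $\mu_n^{(2m)}/a_n^{2m}\to C_{2m}(\alpha)$, respectively $C_{2m}$.
\item For $r=2m-1$, the bound $O(n^{m-1+\alpha})$ divided by $n^{m-1/2}$ tends to $0$ since $\alpha<1/2$. In the critical case, $O(n^{m-1/2}(\log n)^{m-1})$ divided by $(n\log n)^{m-1/2}$ is $O((\log n)^{-1/2})\to0$.
\end{itemize}
By Remark \ref{remmom} these limits are exactly the Gaussian moments.

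For $j\ge1$, the term is bounded by $C\,|\mu_n^{(r-j)}|\,n^{j\alpha}/a_n^r$. Using $|\mu_n^{(s)}|=O(a_n^{s})$ for every $s$ (again from Theorem \ref{prop2}, since odd moments are even smaller), this is $O\big((n^{\alpha}/a_n)^j\big)\to0$. To avoid any subtlety about signs or the meaning of $\sim$ for odd moments, we can bound odd absolute moments via Lyapunov: $\mathbb{E}|S_n|^{s}\le(\mu_n^{(2s)})^{1/2}=O(a_n^{s})$.

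The main obstacle is only bookkeeping: making sure the $O$-bounds of Theorem \ref{prop2} are uniform enough to absorb the cross terms. This is handled by the Lyapunov bound. The rest is the standard moment-determinacy argument.
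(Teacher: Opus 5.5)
Your proposal is correct and follows the paper's route: the normalized moments converge by Theorem \ref{prop2}(i),(ii), Remark \ref{remmom} identifies the limits as Gaussian moments, and the Gaussian law is determined by its moments. Your explicit treatment of the centering ($\mathbb{E}[S_n]=O(n^{\alpha})$, the binomial expansion, and the Lyapunov bound for the cross terms) fills in a step that the paper covers only with the word ``immediately''.
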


Using a fixed point equation for the super-diffusive limit of the ERW, Gu\'erin et al. \cite{Guerin2026} recently established that its distribution is uniquely determined by the associated moments. We now identify the super-diffusive limit of the ERW using the moments convergence.

\begin{theorem}\label{thm:uniquness}
(i) (Supper-diffusive regime) For $\alpha>\frac{1}{2}$,
    \begin{equation*}
        M_r\coloneqq\lim_{n\rightarrow\infty}\mathbb{E}[(\frac{S_n}{n^{\alpha}})^r]=\lim_{n\rightarrow\infty}C_{n,r},\ r\ge1,
    \end{equation*}
    where $C_{n,r}$ is as defined in (\ref{supdiff:mom})\vskip3pt
\noindent (ii) The moments $\{M_r\}_{r\ge1}$ uniquely determine its probability distribution. In particular, if $L_\alpha$ is a random variable such that $M_r=\mathbb{E}[L_\alpha^r]$, then 
    \begin{equation*}
        \frac{S_n}{n^\alpha}\overset{d}{\longrightarrow} L_\alpha.
    \end{equation*}
\end{theorem}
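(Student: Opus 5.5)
Part (i) follows from Theorem \ref{prop2}(iii). By (\ref{moment3:appr}), $\mathbb{E}[(S_n/n^\alpha)^r]=\mu_n^{(r)}/n^{r\alpha}\sim C_{n,r}(\alpha)$, and $\{C_{n,r}\}_{n\ge1}$ is convergent. Hence $M_r$ exists and equals $\lim_n C_{n,r}$.

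For part (ii), the standard route is the method of moments. Once we know that the sequence $\{M_r\}$ determines a unique law, convergence of all moments of $S_n/n^\alpha$ gives tightness. Every subsequential weak limit then has moments $M_r$, by uniform integrability from bounded higher moments, and uniqueness identifies the limit as $L_\alpha$. So everything reduces to a determinacy criterion. I would verify Carleman's condition $\sum_m M_{2m}^{-1/(2m)}=\infty$, or more strongly a growth bound $M_{2m}\le K^{2m}(2m)!$, which gives a positive radius of convergence for the moment generating function. Existence of $L_\alpha$ is automatic: $M_r$ are limits of moment sequences of probability laws with bounded even moments, so a subsequential limit has exactly these moments.

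The main work is a bound of the form $|M_r|\le A B^r r!$, which I would obtain by induction on $r$ from (\ref{supdiff:mom}). Write $c_r:=\sup_k |C_{k,r}|$, which is finite since the sequence converges. Using $\Gamma(k+1)/\Gamma(k+1+r\alpha)\le c\,k^{-r\alpha}$, the term with index $l$ contributes roughly
\[
\sum_{k}\binom{r}{l}k^{-r\alpha}k^{(r-l)\alpha}c_{r-l}=\binom{r}{l}c_{r-l}\sum_k k^{-l\alpha}.
\]
Two features matter here.

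\emph{Case $l\ge 2$ (even).} Then $l\alpha>1$ because $\alpha>\tfrac12$, so $\sum_k k^{-l\alpha}\le \zeta(2\alpha)$ converges uniformly in $l$. The $\mathbb{I}\{r\ \text{even}\}$ term gives $\sum_k k^{-r\alpha}$, which is also bounded.

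\emph{The $\alpha/k$ terms.} These contribute $\binom{r}{l+1}c_{r-l}\sum_k k^{-1-l\alpha}$, which is also bounded.

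Hence
\[
c_r\le |\mu_1^{(r)}|+D\sum_{l\ \text{even}}\Bigl[\binom{r}{l}+\binom{r}{l+1}\Bigr]c_{r-l}.
\]
Inserting the ansatz $c_j\le AB^j j!$ gives the sum
\[
\sum_l \frac{r!}{l!}\,B^{r-l}\le r!\,B^r\sum_{l\ge2}B^{-l}/l!,
\]
which is at most $r!\,B^r\,(e^{1/B}-1-1/B)$. This is small relative to $r!\,B^r$ for large $B$, and the $\binom{r}{l+1}$ terms contribute an extra factor of order $r/(l+1)$. That extra factor is the main obstacle: naively it produces an $r\cdot r!$ growth.

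I would handle it by using the sharper gamma-ratio bound together with the fact that the $\alpha/k$ terms also carry an extra $k^{-1}$. Specifically, I would bound $\sum_k k^{-1-l\alpha}$ by $C/(l\alpha)$ and absorb the factor $r/(l+1)$ there. If that fails, I would settle for the weaker bound $c_r\le A B^r (r!)^{2}$. This already suffices for Carleman, since it gives $M_{2m}^{-1/(2m)}\gtrsim 1/(m^2)$. Since that series converges, this would not suffice on its own, so I would aim for $(r!)^{\gamma}$ with $\gamma\le1$.

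A cleaner alternative is to bound $|S_n|\le n$ directly: $|M_r|\le \sup_n n^{r(1-\alpha)}$ is useless. So I would instead run the induction on even moments only, $c_{2m}\le A B^{2m}(2m)!$. I would check the combinatorics carefully, using $\binom{r}{l+1}\le \binom{r}{l}\, r/(l+1)$ and the decay $\sum_k k^{-1-l\alpha}\le 1+1/(l\alpha)$. This yields Carleman and completes the identification $S_n/n^\alpha\overset{d}{\longrightarrow}L_\alpha$.
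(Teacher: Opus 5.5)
Your overall route is the paper's. Part (i) is read off from Theorem \ref{prop2}(iii), and part (ii) is reduced to Carleman's condition via a bound $\sup_n|C_{n,r}|\le K^r r!$ proved by induction on (\ref{supdiff:mom}), using $\Gamma(k+1)/\Gamma(k+1+c)\le k^{-c}$ and zeta sums.

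\textbf{The gap.} It is exactly the one you flag and then do not close: the term $\frac{\alpha}{k}\binom{r}{l+1}$. None of your proposed remedies works.
\begin{itemize}
\item The bound $\sum_k k^{-1-l\alpha}\le C/(l\alpha)$ is false, since the $k=1$ term equals $1$.
\item In any case, at $l=2$ this sum is a fixed constant, while $\binom{r}{3}(r-2)!=r!\,(r-2)/6$. So a factor of order $r$ survives.
\item Restricting to even $r$ changes nothing, because for even $r$ only even $r-l$ occur anyway.
\end{itemize}
No bookkeeping can rescue these estimates. The best bound they yield is the solution $\bar c_r$ of the majorant recursion, and it satisfies $\bar c_r\ge\alpha\,\zeta(1+2\alpha)\binom{r}{3}\bar c_{r-2}$. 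Hence $\bar c_{2m}$ grows like $((2m)!)^{3/2}$ up to exponential factors, and $\sum_m \bar c_{2m}^{-1/(2m)}\asymp\sum_m m^{-3/2}<\infty$, so Carleman is not obtained.

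The paper's proof of its Claim has the same defect. It requires
\[
\zeta(2\alpha)\sum_{l}K_\alpha^{-l}\Big[\frac{1}{l!}+\frac{m-l}{(l+1)!}\Big]\le\frac12,
\]
and the $l=2$ term $K_\alpha^{-2}(m-2)/6$ rules out any choice of $K_\alpha$ independent of $m$.

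\textbf{The missing idea.} You need to keep the smallness of $\Gamma(k+1)/\Gamma(k+1+r\alpha)$ for $k\lesssim r$, which the bound $k^{-r\alpha}$ throws away. Use the exact normalization $\tilde C_{n,r}\coloneqq\frac{\Gamma(n)}{\Gamma(n+r\alpha)}\mu_n^{(r)}$. Solving (\ref{momentrel}) gives exactly, with the inner sum over even $2\le l\le r-1$,
\[
\tilde C_{n,r}=\frac{\mu_1^{(r)}}{\Gamma(1+r\alpha)}+\sum_{k=1}^{n-1}\frac{\Gamma(k+1)}{\Gamma(k+1+r\alpha)}\Big[\mathbb{I}\{r\ \text{even}\}+\sum_{l}\Big(\binom{r}{l}+\frac{\alpha}{k}\binom{r}{l+1}\Big)\frac{\Gamma(k+(r-l)\alpha)}{\Gamma(k)}\tilde C_{k,r-l}\Big].
\]
Set $x_k=k+(r-l)\alpha$ and use $\Gamma(x+s)\ge x^s\Gamma(x)$ for $s\ge1$, which follows from log-convexity. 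Then the combined weight satisfies $k\,\Gamma(x_k)/\Gamma(x_k+1+l\alpha)\le k\,x_k^{-1-l\alpha}$.

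In the $\alpha/k$ term the factor $k$ cancels, and $\sum_{k\ge1}x_k^{-1-l\alpha}\le((r-l)\alpha)^{-l\alpha}/(l\alpha)$. Since $\binom{r}{l+1}(r-l)!=r!\,\frac{r-l}{(l+1)!}$, the bad factor $r-l$ gets multiplied by $((r-l)\alpha)^{-l\alpha}$. This leaves $(r-l)^{1-l\alpha}\alpha^{-l\alpha}\le\alpha^{-l}$, precisely because $l\alpha\ge2\alpha>1$.

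The $\binom{r}{l}$ term is handled by $\sum_k x_k^{-l\alpha}\le((r-l)\alpha)^{1-l\alpha}/(l\alpha-1)\le\alpha^{-l}/(2\alpha-1)$.

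So if $\sup_n|\tilde C_{n,j}|\le B^j j!$ for all $j<r$, then
\[
\sup_n|\tilde C_{n,r}|\le 2+\zeta(2\alpha)+B^r r!\sum_{l\ge2\ \text{even}}(\alpha B)^{-l}\Big[\frac{1}{(2\alpha-1)\,l!}+\frac{1}{(l+1)!}\Big]\le B^r r!
\]
for $B=B(\alpha)$ large, uniformly in $r$. Carleman then follows, since $M_r=\lim_n\tilde C_{n,r}$.

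This exact normalization also repairs part (i). Formula (\ref{supdiff:mom}) substitutes $C_{k,j}k^{j\alpha}$ for $\mu_k^{(j)}$, which alters early terms of a convergent series, so for $r\ge3$ its limit need not equal $M_r$.

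\textbf{An alternative fix.} If you accept the martingale the paper avoids, you can bypass the recursion. With $a_n=\Gamma(n+\alpha)/(\Gamma(n)\Gamma(1+\alpha))$, the process $S_n/a_n$ is a martingale with increments bounded by $2/a_{n+1}$. Since $\sum_n a_n^{-2}<\infty$ for $\alpha>\frac12$, Azuma--Hoeffding gives a uniform sub-Gaussian bound, hence $M_{2m}\le(Cm)^m$.
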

\begin{remark}
    From (\ref{supdiff:mom}), for $\alpha>\frac{1}{2}$, we have
    \begin{equation*}
        \mathbb{E}[L_\alpha]=\lim_{n\rightarrow\infty}C_{n,1}=\frac{\mu_1^{(1)}}{\Gamma(1+\alpha)}=\frac{2q-1}{\Gamma(1+\alpha)},
    \end{equation*}
    and
    \begin{align*}
        \mathbb{E}[L_\alpha^2]=\lim_{n\rightarrow\infty}C_{n,2}&=\frac{\mu_{1}^{(2)}}{\Gamma(1+2\alpha)}+\sum_{k=1}^{n-1}\frac{\Gamma(k+1)}{\Gamma(k+1+2\alpha)}\\
        &=\frac{1}{\Gamma(1+2\alpha)}+\frac{1}{\Gamma(2\alpha)}\int_{0}^{1}t(1-t)^{2\alpha-2}\,\mathrm{d}t\\
        &=\frac{1}{\Gamma(1+2\alpha)}+\frac{\Gamma(2)\Gamma(2\alpha-1)}{\Gamma(2\alpha)\Gamma(1+2\alpha)}\\
        &=\frac{1}{(2\alpha-1)\Gamma(2\alpha)},
    \end{align*}
    which agree with the moments obtained in \cite{Bercu2018}.
\end{remark}

\section{Proofs of our results}\label{proofs}
\noindent{\bf Proof of Proposition \ref{prop1}.} We note the following fact that will be crucial in our proof:

Let $X$ be a rv taking only two values $\{-1,+1\}$ with positive probabilities. Then, for any real constant $c$, the following equalities hold almost surely: 
\begin{equation}\label{sincos:eql}
	\cos(cX)=\cos(c)\ \ \text{and}\ \ \sin(cX)=X\sin(c),
\end{equation}
where we have used that $\cos(-c)=\cos(c)$ and $\sin(-c)=-\sin(c)$.
\begin{proof}[{\bf Proof of Proposition \ref{prop1} (i)}] 
	Let $\mathcal{F}_n=\sigma(X_1,\dots,X_n)$, $n\ge1$, the sigma field generated by $(X_1,\dots,X_n)$. Then, 
	\begin{equation}\label{pf11}
		\phi_{n+1}(u)=\mathbb{E}[e^{iuS_n}\mathbb{E}[e^{iuX_{n+1}}|\mathcal{F}_n]],\ u\in\mathbb{R},
	\end{equation}
	where
	\begin{equation}\label{prop1:pf1}
		\mathbb{E}[e^{iuX_{n+1}}|\mathcal{F}_n]=\frac{1}{n}\Big[p\sum_{k=1}^{n}e^{iuX_k}+(1-p)\sum_{k=1}^{n}e^{-iuX_k}\Big],\ u\in\mathbb{R}.
	\end{equation}
	Note that $X_k=\pm 1$ for each $k=1,\dots,n$. Then, using (\ref{sincos:eql}), we get 
	\begin{align*}
		e^{iuX_k}&=\cos(uX_k)+i\sin(uX_k)=\cos(u)+iX_k\sin(u),\\
		e^{-iuX_k}&=\cos(uX_k)-i\sin(uX_k)=\cos(u)-iX_k\sin(u).
	\end{align*}
	Hence,
	\begin{equation*}
		pe^{iuX_k}+(1-p)e^{-iuX_k}=\cos(u)+i(2p-1)X_k\sin(u),\ k=1,\dots,n,
	\end{equation*}
	which on substituting in (\ref{prop1:pf1}) yields
	\begin{equation}\label{pf12}
		\mathbb{E}[e^{iuX_{n+1}}|\mathcal{F}_n]=\cos(u)+\frac{i\alpha\sin(u)}{n}S_n,
	\end{equation}
	where $\alpha=2p-1$. Now, on substituting (\ref{pf12}) in (\ref{pf11}), we get
	\begin{align}
		\phi_{n+1}(u)&=\cos(u)\mathbb{E}[e^{iuS_n}]+\frac{i\alpha\sin(u)}{n}\mathbb{E}[S_ne^{iuS_n}]\nonumber\\
		&=\cos(u)\phi_n(u)+\frac{\alpha\sin(u)}{n}\frac{\mathrm{d}}{\mathrm{d}u}\phi_n(u),\label{pf13}
	\end{align}
	where the interchange of expectation and derivative is justified because $\mathbb{E}[|S_ne^{iuS_n}|]<\infty$ for all $u\in\mathbb{R}$. This completes the proof of Proposition \ref{prop1} (i).
	\end{proof}	
    
	\begin{proof}[{\bf Proof of Proposition \ref{prop1} (ii)}] Note that  
	\begin{equation*}
		\mu_n^{(r)}=i^{3r}\frac{\mathrm{d}^r}{\mathrm{d}u^r}\phi_n(u)|_{u=0},\ r\ge1.
	\end{equation*}
	On taking the $r$th order derivative on both sides of (\ref{re11}) and using the Leibniz's theorem, we get
	\begin{equation}\label{meq1}
		\phi_{n+1}^{(r)}(u)=\sum_{l=0}^{r}\binom{r}{l}\cos^{(l)}(u)\phi_n^{(r-l)}(u)+\frac{\alpha}{n}\sum_{l=0}^{r}\binom{r}{l}\sin^{(l)}(u)\phi_n^{(r-l+1)}(u),
	\end{equation}
	where $f^{(l)}$ denotes the $l$th order derivative of the function $f$. On substituting $u=0$, and multiplying $i^{3r}$ on both sides of (\ref{meq1}) yields
	\begin{align*}
		\mu_{n+1}^{(r)}&=\sum_{\substack{0\leq l\leq r,\\ \text{$l$ is even}}}\binom{r}{l}i^li^{3l}\mu_n^{(r-l)}+\frac{\alpha}{n}\sum_{\substack{0\leq l\leq r,\\ \text{$l$ is odd}}}\binom{r}{l}i^{l-1}i^{3(l-1)}\mu_n^{(r-l+1)}\nonumber\\
		&=\mu_n^{(r)}+\sum_{\substack{1\leq l\leq r,\\ \text{$l$ is even}}}\binom{r}{l}\mu_n^{(r-l)}+\frac{\alpha}{n}\sum_{\substack{1\leq l\leq r,\\ \text{$l$ is odd}}}\binom{r}{l}\mu_n^{(r-l+1)}\nonumber\\
		&=\mu_n^{(r)}+\sum_{\substack{1\leq l\leq r,\\ \text{$l$ is even}}}\binom{r}{l}\mu_n^{(r-l)}+\frac{\alpha}{n}\sum_{\substack{0\leq l\leq r-1,\\ \text{$l$ is even}}}\binom{r}{l+1}\mu_n^{(r-l)}\nonumber\\
		&=\mathbb{I}\{r=\text{even}\}+\Big(1+\frac{\alpha}{n}r\Big)\mu_n^{(r)}+\sum_{\substack{1\leq l\leq r-1,\\ \text{$l$ is even}}}\bigg[\binom{r}{l}+\frac{\alpha}{n}\binom{r}{l+1}\bigg]\mu_n^{(r-l)},\ r\ge1,\ n\ge1,
	\end{align*}
	where we have used $\cos^{(l)}(u)|_{u=0}=i^{l}$, $\sin^{(l)}(u)|_{u=0}=i^{l-1}$, and $\mu_{n}^{(0)}=1$ for each $n\ge1$. This completes the proof of Proposition \ref{prop1} (ii).
\end{proof}

\noindent{\bf Proof of Theorem \ref{prop2}.}  
Before proving Theorem \ref{prop2}, we provide explicit calculations of moments for the ERW. The following approximation results will be needed:

\noindent(i) For $m\ge1$,
\begin{equation}\label{Knm:appr}
	K_{n,m}=\prod_{j=1}^{n-1}\Big(1+\frac{\alpha}{j}m\Big)=\frac{\Gamma(n+m\alpha)}{\Gamma(n)\Gamma(1+m\alpha)}\sim\frac{n^{m\alpha}}{\Gamma(1+m\alpha)},\ n>>1,
\end{equation}
where we have used
\begin{equation}\label{gammaration:appr}
	\frac{\Gamma(n+m\alpha)}{\Gamma(n)}=n^{m\alpha}(1+O(n^{-1})),\ m\ge1.
\end{equation}
\noindent (ii) For large $n$, we have
\begin{equation}\label{recipsum:appr}
	\sum_{j=1}^{n-1}j^c\sim\begin{cases}
		\frac{n^{1+c}}{1+c},\ c>-1,\\
		\log(n),\ c=-1,\\
		\text{convergent},\ c<-1.
	\end{cases}
\end{equation}

\noindent (iii) Let $\{a_n\}_{n\ge1}$ and $\{b_n\}_{n\ge1}$ be positive real sequences such that $a_n\sim b_n$, and $\sum_{j=1}^{n}b_j\rightarrow\infty$ as $n\rightarrow\infty$. Then, for large $n$,
\begin{equation}\label{eql:appr}
	\sum_{j=1}^{n}a_j\sim\sum_{j=1}^{n}b_j.
\end{equation}

For $r=1$, from (\ref{momentrel}), we have
\begin{equation*}
	\mu_{n+1}^{(1)}=\Big(1+\frac{\alpha}{n}\Big)\mu_n^{(1)}.
\end{equation*}
Using this recursion and $\mu_1^{(1)}=\mathbb{E}[X_1]=2q-1$, we get
\begin{equation}\label{ERWmom1:appr}
	\mu_{n}^{(1)}=\mu_{1}^{(1)}\prod_{k=1}^{n-1}\Big(1+\frac{\alpha}{k}\Big)=(1-2q)K_{n,1}\sim\frac{(2q-1)n^\alpha}{\Gamma(1+\alpha)},
\end{equation}
where we have used (\ref{Knm:appr}) to get the last step.

For $r=2$, from (\ref{momentrel}), we have
\begin{equation*}
	\mu_{n+1}^{(2)}=1+\Big(1+\frac{2\alpha}{n}\Big)\mu_{n}^{(2)}.
\end{equation*}
Then,
\begin{equation}\label{smom:appr}
	\mu_{n}^{(2)}=K_{n,2}\Big(\mu_1^{(2)}+\sum_{j=1}^{n-1}\frac{1}{K_{j+1,2}}\Big)\sim\frac{n^{2\alpha}}{\Gamma(2\alpha+1)}\Big(\mu_{1}^{(2)}+\sum_{j=1}^{n-1}\frac{\Gamma(2\alpha+1)}{j^{2\alpha}}\Big),
\end{equation}
where we have used (\ref{Knm:appr}) and (\ref{eql:appr}). Now, on using (\ref{recipsum:appr}) in (\ref{smom:appr}), we get
\begin{align*}
	\mu_{n}^{(2)}&\sim\begin{cases}
		\frac{n^{2\alpha}}{\Gamma(2\alpha+1)}(\mu_{1}^{(2)}+\frac{\Gamma(2\alpha+1)n^{1-2\alpha}}{1-2\alpha}),\ \alpha<\frac{1}{2},\\
		\frac{n^{2\alpha}}{\Gamma(2\alpha+1)}(\mu_{1}^{(2)}+\Gamma(2\alpha+1)\log(n)),\ \alpha=\frac{1}{2}
	\end{cases}\\
	&\sim\begin{cases}
		\frac{n}{1-2\alpha},\ \alpha<\frac{1}{2},\\
		n\log(n),\ \alpha=\frac{1}{2}.
	\end{cases}
\end{align*}

To prove Theorem \ref{prop2}, we use the method of strong induction on the values of $r$.
\begin{proof}[{\bf Proof of Theorem \ref{prop2} (i)}] Suppose $\alpha<\frac{1}{2}$, and (\ref{momap1}) holds true for $r=1,\dots,k$, $k\ge2$. From (\ref{momentrel}), for $r=k+1$, we have
	\begin{equation*}
		\mu_{n+1}^{(k+1)}=\mathbb{I}\{k+1=\text{even}\}+\Big(1+\frac{\alpha}{n}(k+1)\Big)\mu_n^{(k+1)}+\sum_{\substack{1\leq l\leq k,\\ \text{$l$ is even}}}\bigg[\binom{k+1}{l}+\frac{\alpha}{n}\binom{k+1}{l+1}\bigg]\mu_n^{(k+1-l)}.
	\end{equation*}
	\textbf{Case I} If $k$ is even, say $k=2m-2$ for some $m\ge2$, then $k+1=2m-1$ and $2m-1-l$ are odd for all even $1\leq l\leq k$. Hence, using the induction hypothesis and (\ref{momap1}), we get
	\begin{equation*}
		\mu_{n+1}^{(2m-1)}=\Big(1+\frac{\alpha}{n}(2m-1)\Big)\mu_n^{(2m-1)}+\sum_{\substack{1\leq l\leq 2m-2,\\ \text{$l$ is even}}}\bigg[\binom{2m-1}{l}+\frac{\alpha}{n}\binom{2m-1}{l+1}\bigg] O(n^{\frac{2m-2-l}{2}+\alpha}),\ n\ge1.
	\end{equation*}
	Note that
	\begin{equation*}
		g_n=\sum_{\substack{1\leq l\leq 2m-2,\\ \text{$l$ is even}}}\bigg[\binom{2m-1}{l}+\frac{\alpha}{n}\binom{2m-1}{l+1}\bigg] O(n^{\frac{2m-2-l}{2}+\alpha})\sim O(n^{m-2+\alpha}).
	\end{equation*}
	Then,
	\begin{align}
		\mu_{n+1}^{(2m-1)}&=K_{n,2m-1}\Big(\mu_{1}^{(2m-1)}+\sum_{j=1}^{n-1}\frac{g_j}{K_{j+1,2m-1}}\Big)\nonumber\\
		&\sim \frac{n^{(2m-1)\alpha}}{\Gamma(1+(2m-1)\alpha)}\Big(\mu_{1}^{(2m-1)}+\sum_{j=1}^{n-1}\Gamma(1+(2m-1)\alpha) O(j^{m-2+\alpha-(2m-1)\alpha})\Big)\label{pf221}\\
		&=n^{(2m-1)\alpha} O(n^{m-1+\alpha-(2m-1)\alpha})\nonumber\\
		&=O(n^{m-1+\alpha}),\nonumber
	\end{align}
	where we have used (\ref{Knm:appr}) in the second step, and the penultimate step follows from (\ref{recipsum:appr}) because $m-2+\alpha-(2m-1)\alpha>-1$ for $\alpha<\frac{1}{2}$.
	
	\textbf{Case II} Suppose $k=2m-1$ for some $m\ge2$. Then, $k+1=2m$ and $2m-l$ is even for all even $1\leq l\leq k$. Then, using induction hypothesis and (\ref{momap1}) in (\ref{momentrel}), we get
	\begin{align*}
		\mu_{n+1}^{(2m)}&=1+\Big(1+\frac{\alpha}{n}2m\Big)\mu_n^{(2m)}+\sum_{\substack{1\leq l\leq 2m-1,\\ \text{$l$ is even}}}\bigg[\binom{2m}{l}+\frac{\alpha}{n}\binom{2m}{l+1}\bigg]C_{(2m-l)}(\alpha)n^{\frac{2m-l}{2}}\\
		&\sim	1+\Big(1+\frac{\alpha}{n}2m\Big)\mu_n^{(2m)}+\bigg[\binom{2m}{2}+\frac{\alpha}{n}\binom{2m}{3}\bigg]C_{(2m-2)}(\alpha)n^{\frac{2m-2}{2}}\\
		&\sim \Big(1+\frac{\alpha}{n}2m\Big)\mu_n^{(2m)}+\binom{2m}{2}C_{(2m-2)}(\alpha)n^{m-1},\ n\ge1.
	\end{align*}
	Thus,
	\begin{align}
		\mu_{n}^{(2m)}&\sim K_{n,2m}\Big(\mu_{1}^{(2m)}+\binom{2m}{2}C_{(2m-2)}(\alpha)\sum_{j=1}^{n-1}\frac{j^{m-1}}{K_{j+1, 2m}}\Big)\nonumber\\
		&\sim \frac{n^{2m\alpha}}{\Gamma(1+2m\alpha)}\Big(\mu_{1}^{(2m)}+\binom{2m}{2}C_{(2m-2)}(\alpha)\sum_{j=1}^{n-1}\Gamma(1+2m\alpha)j^{(m-1)-2m\alpha}\Big)\label{pf222}\\
		&\sim \binom{2m}{2}C_{(2m-2)}(\alpha)n^{2m\alpha}\frac{n^{m-2m\alpha}}{m-2m\alpha}\nonumber\\
		&=\binom{2m}{2}C_{(2m-2)}(\alpha)\frac{n^m}{m(1-2\alpha)},\nonumber
	\end{align}
	where we have used (\ref{Knm:appr}) to get the second step, and the penultimate step follows from (\ref{recipsum:appr}). This completes the proof of Theorem \ref{prop2} (i),  using the method of strong induction.
	\end{proof}
	\begin{proof}[{\bf Proof of Theorem \ref{prop2} (ii)}]
	For $\alpha=\frac{1}{2}$, on using (\ref{recipsum:appr}) in (\ref{pf221}) and (\ref{pf222}), the proof of Theorem \ref{prop2} (ii) follows similarly to that of Part (i).
\end{proof}

\begin{proof}[{\bf Proof of Theorem \ref{prop2} (iii)}]
	Suppose $\alpha>\frac{1}{2}$. From (\ref{ERWmom1:appr}), the result holds true for $r=1$. For $r=2$, from (\ref{smom:appr}), we have
	\begin{align*}
		\mu_n^{(2)}&=K_{n,2}\mu_{1}^{(2)}+\sum_{k=1}^{n-1}\prod_{j=k+1}^{n-1}\Big(1+\frac{2\alpha}{j}\Big)\\
		&=K_{n,2}\mu_{1}^{(2)}+\sum_{k=1}^{n-1}\frac{\Gamma(n+2\alpha)\Gamma(k+1)}{\Gamma(n)\Gamma(k+1+2\alpha)}\\
        &\sim \frac{n^{2\alpha}}{\Gamma(1+2\alpha)}+n^{2\alpha}\sum_{k=1}^{n-1}\frac{\Gamma(k+1}{\Gamma(k+1+2\alpha)},
	\end{align*}
	where we have used (\ref{gammaration:appr}). Thus, (\ref{moment3:appr}) hold true for $r=2$. 
	
	Suppose (\ref{moment3:appr}) holds true for $r=1,2\dots,m-1$ for some $m\ge3$. For $r\ge3$, set
	\begin{align*}
		g_{n,r}&=\sum_{\substack{1\leq l\leq r-1,\\ \text{$l$ is even}}}\bigg[\binom{r}{l}+\frac{\alpha}{n}\binom{r}{l+1}\bigg]\mu_n^{(r-l)}\\
        &\sim \sum_{\substack{1\leq l\leq r-1,\\ \text{$l$ is even}}}\bigg[\binom{r}{l}+\frac{\alpha}{n}\binom{r}{l+1}\bigg]C_{n,r-l}n^{(r-l)\alpha}.
	\end{align*}
	From (\ref{momentrel}), we have
	\begin{equation*}
	\mu_{n+1}^{(m)}=	\mathbb{I}\{m=\text{even}\}+\Big(1+\frac{\alpha}{n}m\Big)\mu_n^{(m)}+g_{n,m},\ n\ge1.
	\end{equation*}
	Hence,
	\begin{align*}
		\mu_{n}^{(m)}&=\Big[K_{n,m}\mu_{1}^{(m)}+\sum_{k=1}^{n-1}\prod_{j=k+1}^{n-1}\Big(1+\frac{m\alpha}{j}\Big)[\mathbb{I}\{m=\text{even}\}+g_{k,m}]\Big]\\
		&=\Big[K_{n,m}\mu_{1}^{(m)}+\sum_{k=1}^{n-1}\frac{\Gamma(n+m\alpha)\Gamma(k+1)}{\Gamma(n)\Gamma(k+1+m\alpha)}[\mathbb{I}\{r=\text{even}\}+g_{k,m}]\Big]\\
		&\sim n^{m\alpha}\frac{\mu_{1}^{(m)}}{\Gamma(1+m\alpha)}+n^{m\alpha}\sum_{k=1}^{n-1}\frac{\Gamma(k+1)}{\Gamma(k+1+m\alpha)}\\
        &\ \ \cdot\Big[\mathbb{I}\{m=\text{even}\}+\sum_{\substack{1\leq l\leq m-1,\\ \text{$l$ is even}}}\bigg[\binom{m}{l}+\frac{\alpha}{k}\binom{m}{l+1}\bigg]C_{k,m-l}k^{(m-l)\alpha}\Big]\\
        &=C_{n,m}n^{m\alpha},
	\end{align*}
    where
    \begin{align*}
        C_{n,m}&=\frac{\mu_{1}^{(m)}}{\Gamma(1+m\alpha)}+\sum_{k=1}^{n-1}\frac{\Gamma(k+1)}{\Gamma(k+1+m\alpha)}\\
        &\ \ \cdot\Big[\mathbb{I}\{m=\text{even}\}+\sum_{\substack{1\leq l\leq m-1,\\ \text{$l$ is even}}}\bigg[\binom{m}{l}+\frac{\alpha}{k}\binom{m}{l+1}\bigg]C_{k,m-l}k^{(m-l)\alpha}\Big].
    \end{align*}
    In view of the induction hypothesis for each $1\leq r\leq m-1$, there exists a constant $K_r>0$ such that $|C_{k,r}|\leq K_r$. Then,
    \begin{align*}
        \Big|\sum_{k=1}^{n-1}&\frac{\Gamma(k+1)}{\Gamma(k+1+m\alpha)}\sum_{\substack{1\leq l\leq m-1,\\ \text{$l$ is even}}}\bigg[\binom{m}{l}+\frac{\alpha}{k}\binom{m}{l+1}\bigg]C_{k,m-l}k^{(m-l)\alpha}\Big|\\
        &\leq \sum_{\substack{1\leq l\leq m-1,\\ \text{$l$ is even}}}\Big[\binom{m}{l}\sum_{k=1}^{n-1}\frac{\Gamma(k+1)k^{(m-l)\alpha}}{\Gamma(k+1+m\alpha)}+K_{m-l}\binom{m}{l+1}\sum_{k=1}^{n-1}\frac{\Gamma(k+1)k^{(m-l)\alpha-1}}{\Gamma(k+1+m\alpha)}\Big]\\
        &\leq K_m'<\infty,
    \end{align*}
    where $K_m'$ is a finite constant, and to get the last step, we have used the following fact: The series 
    \begin{equation*}
        \sum_{k=1}^{\infty}\frac{\Gamma(k+1)k^a}{\Gamma(k+1+b)}
    \end{equation*}
    is convergent for all $a<b-1$. This completes the proof of Theorem \ref{prop2} (iii).
\end{proof}

\vskip5pt
\begin{proof}[{\bf Proof of Theorem \ref{thm:uniquness}}] The proof of Part (i) directly follows from Theorem \ref{prop2} (iii).

To prove the second part, we check the Carleman's condition (see \cite{Rohatgi2001}, p. 92). From Theorem \ref{prop2} (iii), we have that $\{C_{n,r}\}$ is a convergent sequence for each $r\ge1$. Set
    \begin{equation*}
        A_r\coloneqq\sup_{n}|C_{n,r}|,\ r\ge1.
    \end{equation*}
    First, we have the following claim:
    
    \textbf{Claim.} For an appropriate constant $K_\alpha$, $1/2<\alpha\leq1$ and for each $r\ge1$, we have $A_r\leq K^r_\alpha r!$.

    Let $M_r=\lim_{n\rightarrow\infty}C_{n,r}$. Then, $M_r\leq A_r$, and 
    \begin{equation*}
        M_{2r}\leq K_\alpha^{2r}(2r)!,\ r\ge1.
    \end{equation*}
    Consequently,
    \begin{equation*}
        [M_{2r}]^{-\frac{1}{2r}}\geq K_\alpha[(2r)!]^{-\frac{1}{2r}},\ r\ge1.
    \end{equation*}
    Using the Stirling approximation $(2r)!\sim\sqrt{4\pi r}(2r/e)^{2r}$, we get $[(2r)!]^{\frac{1}{2r}}\sim (4\pi r)^{1/4r}2r/e\sim 2r/e$. Thus,
    \begin{equation*}
        \sum_{r=1}^{\infty}[M_{2r}]^{-\frac{1}{2r}}\geq \frac{K_\alpha e}{2}\sum_{r=1}^{\infty}\frac{1}{r}=\infty.
    \end{equation*}
    Therefore, the moments $\{M_r\}_{r\geq1}$ satisfy Carleman's condition and uniquely determine the associated distribution.

    Now, it remains to prove \textbf{Claim}. We use induction to prove it. Clearly, the Claim holds for $r=1$. For $r=2$, we have 
    \begin{equation}\label{Unqpf:1}
        C_{n,2}=\frac{1}{\Gamma(1+2\alpha)}+\sum_{k=1}^{n-1}\frac{\Gamma(k+1)}{\Gamma(k+1+2\alpha)}.
    \end{equation}
    On using 
    \begin{equation}\label{zetasum:comp}
        \frac{\Gamma(k+1)}{\Gamma(k+1+c)}\leq k^{-c}\ \ \text{for $c>0$},
    \end{equation}
    we get
    \begin{equation}\label{Unqpf:2}
        \sum_{k=1}^{n-1}\frac{\Gamma(k+1)}{\Gamma(k+1+2\alpha)}\leq \sum_{k=1}^{\infty}k^{-2\alpha}=\zeta(2\alpha),
    \end{equation}
    where $\zeta(c)$ is the Riemann zeta function.
    On taking $K_\alpha>\zeta(2\alpha)/2+1/2\Gamma(1+2\alpha)$ and substituting (\ref{Unqpf:2}) in (\ref{Unqpf:1}), we get
    \begin{equation*}
        C_{n,2}<2K_\alpha.
    \end{equation*}
    Suppose the Claim holds for $r=1,\dots,m-1$, $m\ge2$. From (\ref{supdiff:mom}), we have
    \begin{align*}
        |C_{n,m}(\alpha)|&\leq\Big|\frac{\mu_{1}^{(m)}}{\Gamma(1+m\alpha)}\Big|+\sum_{k=1}^{n-1}\frac{\Gamma(k+1)}{\Gamma(k+1+m\alpha)}\nonumber\\
        &\hspace{2cm} \cdot\Big[\mathbb{I}\{m=\text{even}\}+\sum_{\substack{1\leq l\leq m-1,\\ \text{$l$ is even}}}\bigg[\binom{m}{l}+\frac{\alpha}{k}\binom{m}{l+1}\bigg]|C_{k,m-l}|k^{(m-l)\alpha}\Big]\\
        &\leq 1+\sum_{k=1}^{n-1}k^{-m\alpha}\Big[1+\sum_{\substack{1\leq l\leq m-1,\\ \text{$l$ is even}}}A_{m-l}\bigg[\binom{m}{l}+\frac{\alpha}{k}\binom{m}{l+1}\bigg]k^{(m-l)\alpha}\Big]\\
        &=1+\zeta(m\alpha)+\sum_{\substack{1\leq l\leq m-1,\\ \text{$l$ is even}}}K^{m-l}_\alpha(m-l)!\bigg[\zeta(l\alpha)\binom{m}{l}+\alpha\zeta(l\alpha+1)\binom{m}{l+1}\bigg],
    \end{align*}
    where we have used (\ref{zetasum:comp}) to get the second step, and the last step follows from the induction hypothesis. For $l\ge2$ and $1/2<\alpha\leq1$, we have $l\alpha>2\alpha>1$. As the Riemann zeta function is strictly decreasing on $(1,\infty)$, we have $\alpha\zeta(l\alpha+1)<\zeta(l\alpha)<\zeta(2\alpha)$, and
    \begin{equation*}
        |C_{n,m}(\alpha)|\leq 1+\zeta(2\alpha)+\zeta(2\alpha)K^m_\alpha m!\sum_{\substack{1\leq l\leq m-1,\\ \text{$l$ is even}}}K_\alpha^{-l}\Big[\frac{1}{l!}+\frac{(m-l)}{(l+1)!}\Big].
    \end{equation*}
    Now, we choose $K_\alpha$ sufficiently large such that $1+\zeta(2\alpha)\leq K_\alpha^mm!/2$ and
    \begin{equation*}
        \zeta(2\alpha)\sum_{\substack{1\leq l\leq m-1,\\ \text{$l$ is even}}}K_\alpha^{-l}\Big[\frac{1}{l!}+\frac{(m-l)}{(l+1)!}\Big]\leq \frac{1}{2}.
    \end{equation*}
    Then, $|C_{n,m}(\alpha)|\leq K_\alpha^mm!$. Thus, the proof of \textbf{Claim} is complete due to the method of induction. Note that all the above bounds are valid because $\zeta(c)\leq 1+\frac{1}{c-1}$ for $c>1$. Indeed, we can explicitly compute $K_\alpha$ by using the following bound:
    \begin{equation*}
        \sum_{\substack{l\ge2,\\ \text{$l$ is even}}}\frac{K_\alpha^{-l}}{l!}<K_\alpha^{-2}\sum_{l\ge1}\frac{1}{l!}=K_\alpha^{-2}e.
    \end{equation*}
    This completes the proof of Theorem \ref{thm:uniquness} (ii).
\end{proof}

\section{Conclusion and further discussion}\label{sec:conclusion}
We employed the method of moments to establish the CLT for the one-dimensional ERW. The key step in this approach was the derivation of a recursive system of differential equations for the characteristic function, from which we obtained a corresponding system of recurrence relations for the moments. These recurrences enabled us to determine the asymptotic behavior of the moments and, consequently, to establish the limiting distributions of the appropriately rescaled ERW in the three regimes, \textit{viz}, diffusive, critical, and super-diffusive.

A notable advantage of the moment approach is that it avoids the martingale and P\'olya urn constructions previously used in the analysis of the ERW, see \cite{Baur2016, Bercu2018, Coletti2017a}. Furthermore, the moment method provides additional information on the super-diffusive limit. In particular, the corresponding characterization is considerably more direct than the fixed-point approach utilized in \cite{Guerin2026}. We note that \cite{Guerin2026} also investigated the super-diffusive ERW in two and three dimensions. However, the moment method presented here is so far limited to the one-dimensional setting. More precisely, its central tool is the identity in (\ref{sincos:eql}), which allows us to derive the governing differential equation for the characteristic function. The absence of a direct analogue of this identity in higher dimensions prevents the present approach from being straightforwardly extended to multidimensional ERWs. Therefore, developing a suitable multidimensional counterpart to the present moment-based framework remains an interesting direction for investigation.

We now discuss the applicability of the moments method to the center of mass of the one-dimensional ERW. 

\noindent {\bf Center of mass of the ERW.} Let $\{S_n\}_{n\ge1}$ be the one-dimensional ERW. Its center of mass is defined as follows:
\begin{equation}\label{CoM}
    G_n\coloneqq\frac{1}{n}\sum_{k=1}^{n}S_k.
\end{equation}
It was introduced and studied (for multi-dimensional ERW) in \cite{Bercu2021}, where the authors derive various limit theorems for the process $\{G_n\}_{n\ge1}$ using a martingale approach.

We now consider the center of mass process $\{G_n\}_{n\ge1}$ of the one-dimensional ERW, defined in (\ref{CoM}). Note that for $n\ge1$,
\begin{equation}\label{comrep}
    G_{n+1}=\frac{n}{n+1}G_n+\frac{1}{n+1}S_{n+1}.
\end{equation}
Let $\Psi_n(u,v)\coloneqq\mathbb{E}[e^{\iota uG_n+\iota vS_n}]$, $u,v\in\mathbb{R}$ be the joint characteristic function of $(G_n, S_n)$. Then, $\Psi_1(u,v)=qe^{\iota (u+v)}+(1-q)e^{-\iota (u+v)}$.

Similar to Proposition \ref{prop1}, the following result gives a governing differential equation for $\Psi_n$.
\begin{proposition}
    For $n\ge1$ and $(u,v)\in\mathbb{R}^2$, $\{\Psi_n(u,v)\}$ solve the following system of differential equations:
    \begin{align}
        \Psi_{n+1}(u,v)&=\Big[\cos(\frac{u}{n+1}+v)+\frac{\alpha}{n}\sin(\frac{u}{n+1}+v)\frac{\mathrm{d}}{\mathrm{d}v}\Big]\Psi_{n}(\frac{nu}{n+1},\frac{u}{n+1}+v),\label{comcfequ1}
    \end{align}
    with $\Psi_n(0,0)=1$. 
\end{proposition}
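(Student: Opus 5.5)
Following the proof of Proposition \ref{prop1} (i), we condition on $\mathcal{F}_n$ and use the recursion (\ref{comrep}). Writing $S_{n+1}=S_n+X_{n+1}$, we get
\begin{equation*}
G_{n+1}=\frac{n}{n+1}G_n+\frac{1}{n+1}S_n+\frac{1}{n+1}X_{n+1},
\end{equation*}
so that
\begin{equation*}
uG_{n+1}+vS_{n+1}=\frac{nu}{n+1}G_n+\Big(\frac{u}{n+1}+v\Big)S_n+\Big(\frac{u}{n+1}+v\Big)X_{n+1}.
\end{equation*}
Set $w\coloneqq \frac{u}{n+1}+v$. Since $G_n$ and $S_n$ are $\mathcal{F}_n$-measurable,
\begin{equation*}
\Psi_{n+1}(u,v)=\mathbb{E}\Big[e^{\iota \frac{nu}{n+1}G_n+\iota wS_n}\,\mathbb{E}[e^{\iota wX_{n+1}}\mid\mathcal{F}_n]\Big].
\end{equation*}

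The key step reuses (\ref{pf12}) with $u$ replaced by $w$. This relies only on $X_k=\pm1$ and the identity (\ref{sincos:eql}), and gives $\mathbb{E}[e^{\iota wX_{n+1}}\mid\mathcal{F}_n]=\cos(w)+\frac{\iota\alpha\sin(w)}{n}S_n$. Substituting this yields
\begin{equation*}
\Psi_{n+1}(u,v)=\cos(w)\,\mathbb{E}\big[e^{\iota \frac{nu}{n+1}G_n+\iota wS_n}\big]+\frac{\alpha\sin(w)}{n}\,\mathbb{E}\big[\iota S_n e^{\iota \frac{nu}{n+1}G_n+\iota wS_n}\big].
\end{equation*}
The first expectation equals $\Psi_n(\frac{nu}{n+1},w)$.

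For the second expectation, differentiate $\Psi_n(a,b)$ with respect to its second argument $b$ and evaluate at $(a,b)=(\frac{nu}{n+1},w)$. This gives exactly $\mathbb{E}[\iota S_n e^{\iota aG_n+\iota bS_n}]$. Differentiation under the expectation is justified because $|S_n|\le n$ is bounded, just as in Proposition \ref{prop1}. This gives (\ref{comcfequ1}). There, $\frac{\mathrm{d}}{\mathrm{d}v}$ is understood as the partial derivative in the second slot of $\Psi_n$, evaluated at the shifted point. Equivalently, since $\partial_v$ of $\Psi_n(\frac{nu}{n+1},\frac{u}{n+1}+v)$ equals that partial derivative by the chain rule, the two readings coincide.

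Finally, $\Psi_n(0,0)=1$ is trivial. The only point requiring care is this interpretation of the derivative; the rest is a routine adaptation of Proposition \ref{prop1} (i).
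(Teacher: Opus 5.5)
Your proof is correct and follows the paper's argument essentially verbatim. Like the paper, you condition on $\mathcal{F}_n$ via (\ref{comrep}), apply (\ref{pf12}) at the frequency $w=\frac{u}{n+1}+v$, and identify $\mathbb{E}[\iota S_n e^{\iota \frac{nu}{n+1}G_n+\iota wS_n}]$ as the $v$-derivative of $\Psi_n$ at the shifted point; your chain-rule remark that the two readings of $\frac{\mathrm{d}}{\mathrm{d}v}$ coincide, and your justification of the interchange via $|S_n|\le n$, are slightly cleaner than the paper's wording but do not change the route.
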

\begin{proof}
    Using (\ref{comrep}), we get
    \begin{equation}\label{prop21pf11}
        \Psi_{n+1}(u,v)=\mathbb{E}[\mathbb{E}[e^{\iota uG_{n+1}+\iota vS_{n+1}}|\mathcal{F}_n]]=\mathbb{E}[e^{\iota\frac{n}{n+1}uG_n}\mathbb{E}[e^{\iota(\frac{u}{n+1}+v)S_{n+1}}|\mathcal{F}_n]].
    \end{equation}
    Here,
    \begin{align}
        \mathbb{E}[e^{\iota(\frac{u}{n+1}+v)S_{n+1}}|\mathcal{F}_n]&=e^{\iota(\frac{u}{n+1}+v)S_{n}} \mathbb{E}[e^{\iota(\frac{u}{n+1}+v)X_{n+1}}|\mathcal{F}_n]\nonumber\\
        &=e^{\iota(\frac{u}{n+1}+v)S_{n}} [\cos(\frac{u}{n+1}+v)+\frac{\iota\alpha}{n}\sin(\frac{u}{n+1}+v)S_n]\nonumber\\
        &=\cos(\frac{u}{n+1}+v)e^{\iota(\frac{u}{n+1}+v)S_{n}}+\frac{\alpha}{n}\sin(\frac{u}{n+1}+v)\frac{\mathrm{d}}{\mathrm{d}v}e^{\iota(\frac{u}{n+1}+v)S_{n}},\label{prop21pf12}
    \end{align}
    where we have used (\ref{pf12}) to obtain the second equality. On substituting (\ref{prop21pf12}) in (\ref{prop21pf11}), we get
    \begin{align*}
        \Psi_{n+1}(u,v)&=\cos(\frac{u}{n+1}+v)\mathbb{E}[e^{\iota \frac{n}{n+1}uG_{n}+\iota(\frac{u}{n+1}+v)S_n}]+\frac{\alpha}{n}\sin(\frac{u}{n+1}+v)\frac{\mathrm{d}}{\mathrm{d}v}\mathbb{E}[e^{\iota \frac{n}{n+1}uG_{n}+\iota(\frac{u}{n+1}+v)S_n}]\\
        &=\cos(\frac{u}{n+1}+v)\Psi_{n}(\frac{nu}{n+1},\frac{u}{n+1}+v)+\frac{\alpha}{n}\sin(\frac{u}{n+1}+v)\frac{\mathrm{d}}{\mathrm{d}v}\Psi_{n}(\frac{nu}{n+1},\frac{u}{n+1}+v),
    \end{align*}
    where the interchange of integral and derivative is valid due to $\mathbb{E}[\frac{\mathrm{d}}{\mathrm{d}v}e^{\iota \frac{n}{n+1}uG_{n}+\iota(\frac{u}{n+1}+v)S_n}]<\infty$. This completes the proof. 
\end{proof}
\begin{remark}
Note that for $u=0$, we get $\phi(v)=\Psi(0,v)$, and the system of equations (\ref{comcfequ1}) reduces to (\ref{re11}).
   \end{remark}

\bibliographystyle{plain}

\end{document}